\newtheorem{theorem}{Theorem}[section]
\newtheorem*{theorem*}{Theorem}
\newtheorem{lemma}{Lemma}[section]
\newtheorem{corollary}{Corollary}[section]
\newtheorem{conjecture}{Conjecture}[section]
\newtheorem{definition}{Definition}[section]
\theoremstyle{remark}
\newcommand{\CC}{\mathds{C}}
\newcommand{\NN}{\mathds{N}}
\DeclareMathOperator{\re}{Re}
\DeclareMathOperator{\res}{Res}
\begin{document}
\title{$\Omega$-bounds for the partial sums of some modified Dirichlet characters}
\author{Marco Aymone}
\begin{abstract}
We consider the problem of $\Omega$ bounds for the partial sums of a modified character, \textit{i.e.}, a completely multiplicative function $f$ such that $f(p)=\chi(p)$ for all but a finite number of primes $p$, where $\chi$ is a primitive Dirichlet character. We prove that in some special circumstances, $\sum_{n\leq x}f(n)=\Omega((\log x)^{|S|})$, where $S$ is the set of primes $p$ where $f(p)\neq \chi(p)$. This gives credence to a corrected version of a conjecture of Klurman et al., Trans. Amer. Math. Soc.,  374 (11), 2021, 7967–7990. We also compute the Riesz mean of order $k$ for large $k$ of a modified character, and show that the Diophantine properties of the irrational numbers of the form $\log p / \log q$, for primes $p$ and $q$, give information on these averages. 
\end{abstract}

\maketitle

\section{Introduction.}

A quite simple statement about a completely multiplicative\footnote{A function $f:\NN\to\CC$ is completely multiplicative if $f(nm)=f(n)f(m)$ for all positive integers $n$ and $m$, and hence, such functions are determined by its values at primes.} function, and that was proved only recently, is that no matter how we choose the values $f(p)$ at primes $p$ on the unit circle, we will always end up with a completely multiplicative function whose partial sums are unbounded, \textit{i.e.}, 
$$\limsup_{x\to\infty}\left|\sum_{n\leq x}f(n)\right|=\infty.$$

This result was proved in 2015 by Tao \cite{taodiscrepancy} in the context of the \textit{Erd\H{o}s discrepancy problem}. Dirichlet characters\footnote{Dirichlet characters of modulus $q$, often denoted by $\chi$, are $q$-periodic completely multiplicative functions such that $\chi(n)=0$ whenever $\gcd(n,q)>1$.} play an important role in the Erd\H{o}s discrepancy Theory: The non-principal characters $\chi$ are completely multiplicative and have bounded partial sums. But of course they are not a counterexample to the result of Tao since they vanish at a finite subset of primes. However, they seem to be extremal in discrepancy theory due to the following fact: Up to this date, the known example of a completely multiplicative $f$ with $|f|=1$ whose partial sums have the lowest known fluctuation is obtained by adjusting a non-principal character $\chi$ at the the primes $p$ where $\chi(p)=0$. For instance (see Borwein-Choi-Coons \cite{Borweindicrepancy}) we can take the non-principal character mod $3$, say $\chi_3$, and define $f(3)=\pm 1$ and $f(p)=\chi_3(p)$ for all the other primes. The partial sums up to $x$ of this modification of $\chi_3$ are $\ll \log x$. 

Going further, we define:

\begin{definition}[Modified characters]\label{definicao modified} We say that $f:\NN\to\{z\in\CC:|z|\leq 1\}$ is a modified character if $f$ is completely multiplicative, and if there is a Dirichlet character $\chi$  and a finite subset of primes $S$ such that:
\begin{enumerate}
\item For all primes $p\in S$, $f(p)\neq \chi(p)$.
\item For all primes $p\notin S$, $f(p)=\chi(p)$.
\end{enumerate}
In this case we also say that $f$ is a modification of $\chi$ with modification set $S$.
\end{definition}

One can easily show (see \cite{aymoneresemblingmobius} for a proof using a Tauberian result) that for a \textit{modified character} we have
\begin{equation}\label{equacao cota superior}
\sum_{n\leq x}f(n)\ll (\log x)^{|S|}.
\end{equation}
A very open question concerns $\Omega$ bounds for the partial sums of such modifications.

The first treatment to such an $\Omega$ bound is due to Borwein-Choi-Coons \cite{Borweindicrepancy} where they considered the case $|S|=1$ and showed that for a quadratic Dirichlet character $\chi$ with prime modulus $q$, if we set $f(p)=\chi(p)$ for all primes except at $q$, where $f(q)= 1$, then the partial sums 

$$\sum_{n\leq x}f(n)=\Omega(\log x).$$

Later, Klurman et al. \cite{klurmanteravainen}, among other results, proved a stronger form of Chudakov's conjecture \cite{klurmanchudakov}, and also conjectured that the partial sums of a modified character $f$ are actually $\Omega((\log x)^{|S|})$. Their Corollary 1.6 states that the partial sums are $\Omega(\log x)$ if the set of modifications $S$ is finite and for at least one prime $p\in S$, $|f(p)|=1$. Their proof is correct when the Dirichlet character is primitive, but this statement is not true for all characters. This is because, if we modify a non-primitive character $\chi$ at a prime $r$ where $\chi(r)=0$, then there are circumstances where the modification $f$ might end up in another character with smaller modulus. Therefore, we restate their conjecture in the following form.

\begin{conjecture} Let $f$ be a modification of a primitive Dirichlet character $\chi$. Assume that for each prime $p$ in the set of modifications $S$ we have $|f(p)|=1$. Then  
$$\sum_{n\leq x}f(n)=\Omega((\log x)^{|S|}).$$ 
\end{conjecture} 

The first result of this paper gives credence to this conjecture by showing that it is true in some very special circumstances.

\begin{corollary} Let $f$ be a modification of a primitive Dirichlet character $\chi$ such that $\chi(-1)=-1$. If for each prime $p$ in the set of modifications $S$ we have $f(p)=+1$, then 
$$\sum_{n\leq x}f(n)=\Omega((\log x)^{|S|}).$$ 

\end{corollary}

The result above is a direct consequence of the slightly more general Theorem below.

\begin{theorem}\label{Teorema principal} Let $f$ be a modification of a primitive Dirichlet character $\chi$ such that for each prime $p$ in the set of modifications $S$, $|f(p)|=1$. Let 
$$T=\sum_{\substack{p\in S\\f(p)=1}}1-\sum_{\substack{p\in S\\\chi(p)=1}}1.$$ 
Let 
$$N=\begin{cases}\max\{0,T\}$, \mbox{ if } $\chi(-1)=-1,\\\max\{0,T-1\}, \mbox{ if } \chi(-1)=1.\end{cases}$$
Then
$$\sum_{n\leq x}f(n)=\Omega((\log x)^{N}).$$
\end{theorem}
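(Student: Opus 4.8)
The plan is to read off the size of the partial sums from the behaviour of the Dirichlet series of $f$ at the point $s=0$, which is where the relevant singularity sits: by \eqref{equacao cota superior} the series $F(s):=\sum_{n\ge1}f(n)n^{-s}$ converges for $\Re s>0$, so its abscissa of convergence is $0$ and the poly-logarithmic growth is governed by the edge $s=0$. Since $f$ is completely multiplicative and coincides with $\chi$ off the finite set $S$, the Euler product factors as
\[
F(s)=L(s,\chi)\,G(s),\qquad G(s):=\prod_{p\in S}\frac{1-\chi(p)p^{-s}}{1-f(p)p^{-s}},
\]
an identity valid for $\Re s>1$ and hence, by analytic continuation, throughout $\Re s>0$ (every pole of the rational-in-$p^{-s}$ factors of $G$ lies on $\Re s=0$, because $|f(p)|=1$ forces $p^{-\Re s}=1$ at a zero of the denominator). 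As $N\ge1$ forces $\chi$ to be non-principal and primitive, $L(s,\chi)$ is entire, so the right-hand side provides a meromorphic continuation of $F$ to a neighbourhood of $s=0$.

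Next I would carry out the local analysis at $s=0$. Expanding $p^{-s}=1-s\log p+O(s^2)$, the factor of $G$ attached to $p\in S$ has a simple pole at $s=0$ exactly when $f(p)=1$ (vanishing denominator) and a simple zero exactly when $\chi(p)=1$ (vanishing numerator), while every remaining factor tends to the nonzero constant $(1-\chi(p))/(1-f(p))$ — nonzero because $f(p)\ne\chi(p)$ prevents both of $f(p),\chi(p)$ from equalling $1$. Thus $G$ has at $s=0$ a pole of order exactly $T$ with nonzero leading Laurent coefficient. For the $L$-factor I will invoke the standard trivial-zero structure coming from the functional equation and the poles of the Gamma factor: if $\chi(-1)=-1$ then $L(0,\chi)\ne0$, whereas if $\chi(-1)=1$ (and $\chi$ non-principal) then $L(s,\chi)$ has a \emph{simple} zero at $s=0$, so $L'(0,\chi)\ne0$. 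Multiplying, $F=L(\cdot,\chi)G$ has at $s=0$ a pole of order exactly $N$ with a nonzero leading coefficient $C$: order $T$ in the odd case, and order $T-1$ in the even case, where the simple zero of $L$ absorbs one order of the pole.

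It then remains to turn ``$F$ has a pole of order $N$ at $s=0$'' into the $\Omega$-bound, for which only the soft direction of the Tauberian correspondence is needed. When $N=0$ the claim is trivial since $A(x):=\sum_{n\le x}f(n)$ has $A(1)=1$. For $N\ge1$ I argue by contradiction: suppose $A(x)=o((\log x)^N)$. Abel summation gives, for real $s>0$,
\[
F(s)=s\int_1^\infty A(x)\,x^{-s-1}\,dx,
\]
legitimate because $A(x)\ll(\log x)^{|S|}$. Splitting at a fixed $X_0>1$ (the part over $[1,X_0]$ being $O(s)$), substituting $u=s\log x$ on $[X_0,\infty)$, and using $A(x)/(\log x)^N\to0$ with dominated convergence, one obtains $s^NF(s)\to0$ as $s\to0^+$. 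This contradicts $s^NF(s)\to C\ne0$, so $\limsup_{x\to\infty}|A(x)|/(\log x)^N>0$, i.e. $A(x)=\Omega((\log x)^N)$.

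The genuinely delicate points, and where I expect the main obstacle to lie, are two. First, verifying that the leading Laurent coefficient of $F$ at $s=0$ is \emph{nonzero}: this is exactly what pins the pole order at $N$ rather than something smaller, and it relies on the nonvanishing of each local factor of $G$ together with the precise order (nonvanishing versus simple zero) of $L(s,\chi)$ at $s=0$ prescribed by the parity $\chi(-1)$. Second, the uniform control in the dominated-convergence step, where the boundedness of $A(x)/(\log x)^N$ away from $x=1$ — a direct consequence of the contradiction hypothesis — is precisely what supplies the integrable majorant. Primitivity of $\chi$ is used decisively in the first point: it is what makes $L$ entire with the clean trivial-zero behaviour at $s=0$, and it is exactly the breakdown of this for imprimitive characters that the paper identifies as the reason Klurman et al.'s original statement needed correcting.
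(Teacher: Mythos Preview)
Your proposal is correct and follows essentially the same route as the paper: factor $F(s)=L(s,\chi)\prod_{p\in S}\frac{1-\chi(p)p^{-s}}{1-f(p)p^{-s}}$, read off a pole of order $N$ at $s=0$ from the local factors together with the parity-dependent behaviour of $L(0,\chi)$ via the functional equation, and then derive the $\Omega$-bound by contradiction through the Mellin representation $F(s)=s\int_1^\infty A(x)x^{-s-1}\,dx$. The only cosmetic difference is that the paper packages the final step as an explicit Gamma-integral computation (its Lemma~\ref{Lemma integral}, $\int_1^\infty(\log x)^\alpha x^{-1-\sigma}\,dx=c(\alpha)\sigma^{-1-\alpha}$), whereas you phrase the same estimate via the substitution $u=s\log x$ and dominated convergence; these are the same calculation.
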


One of the reasons behind the result above is that the Dirichlet series of $f$, say $F(s)$, can be written as 
\begin{equation}\label{equacao F}
F(s)=\left(\prod_{p\in S}\frac{1-\frac{\chi(p)}{p^s}}{1-\frac{f(p)}{p^s}}\right)L(s,\chi),
\end{equation}
and the proof consists in analyzing the behaviour of $F(s)$ as $s\to 0^+$, and hence, the functional equation for $L(s,\chi)$ is important here.
   
\subsection{Riesz means and irrationality of the numbers $\log p / \log q$}
Another way to measure the mean behaviour of a sequence $(f(n))_n$ is by its Riesz means. The Riesz mean of order $1$
is defined as
$$R_1(x):=\sum_{n\leq x}f(n)\log (x/n).$$
After partial summation it is not difficulty to see that $R_1(x)$ is equal to $\log x$ times the logarithmic average of $\sum_{n\leq x}f(n)$:
$$R_1(x)=\log x \times \frac{1}{\log x}\int_1^x \left(\sum_{n\leq t}f(n)\right)\frac{dt}{t}.$$
 Therefore, it may regarded as a smooth average of $f(n)$. 

Going further, we can define (see the book of Montgomery and Vaughan \cite{montgomerylivro}) the Riesz mean of order $k$:

$$R_k(x):=\frac{1}{k!}\sum_{n\leq x}f(n)(\log (x/n))^k.$$
It is not difficult to see that $\Omega$ results for $R_k(x)$ transfer to the classical partial sums, as upper bounds for the classical partial sums transfer to $R_k(x)$. 

In our next result we were able to compute with precision the Riesz mean of order $k$ of a modified character, which allows us to say that the exponent $N$ in Theorem \ref{Teorema principal} is optimal in the Riesz mean context.

\begin{theorem}\label{Teorema riesz} Let $f$ be a modification of a primitive Dirichlet character $\chi$ such that for each prime $p$ in the set of modifications $S$, $f(p)$ is a root of unity. Let $T$ be as in Theorem \ref{Teorema principal} and 
$$M=\begin{cases}T,& \mbox{ if }  \chi(-1)=-1,\\ T-1,& \mbox{ if } \chi(-1)=1.\end{cases}$$ 
Then there exists a constant $\gamma>0$ such that for all integers $k\geq 10+\gamma(|S|+1)\max_{p\in S}(\log p)^2$, as $x\to\infty$
$$\sum_{n\leq x}f(n)(\log x/n)^k=\begin{cases}&P(\log x)+O(1),\mbox{ if }M+k\geq 1,\\&O(1), \mbox{ if }M+k\leq 0,\end{cases}$$
where in the case that $M+k\geq 1$, $P(x)$ is a polynomial of degree $M+k$ with leading coefficient $a_{M+k}$ given by
$$a_{M+k}=c_\chi\frac{k!}{(M+k)!}\left(\prod_{\substack{p\in S\\ f(p)=1}}\frac{1-\chi(p)}{\log p}\right)\left(\prod_{\substack{p\in S\\ \chi(p)=1}}\frac{\log p}{1-f(p)}\right)\left(\prod_{\substack{p\in S\\ f(p),\,\chi(p)\neq 1}}\frac{1-\chi(p)}{1-f(p)}\right),$$
where 
$$c_\chi=\begin{cases}L(0,\chi),\mbox{ if } \chi(-1)=-1,\\ L'(0,\chi),\mbox{ if }\chi(-1)=1.\end{cases}$$
\end{theorem}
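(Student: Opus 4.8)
The plan is to write the Riesz mean as a Mellin--Perron integral and then push the line of integration to the left, reading the main term off as a residue at $s=0$ and showing that everything else is $O(1)$. First I would record the standard identity
$$\frac{1}{2\pi i}\int_{(c)}\frac{y^s}{s^{k+1}}\,ds=\frac{1}{k!}(\log y)^k\ (y\ge 1),\qquad =0\ (0<y<1),$$
valid for $c>0$, $k\ge 1$, and use the absolute convergence of $F(s)=\sum_n f(n)n^{-s}$ on $\mathrm{Re}(s)=c>1$ to interchange summation and integration, giving
$$\frac{1}{k!}\sum_{n\le x}f(n)\big(\log (x/n)\big)^k=\frac{1}{2\pi i}\int_{(c)}F(s)\frac{x^s}{s^{k+1}}\,ds.$$
By \eqref{equacao F} we have $F=G\cdot L(\cdot,\chi)$ with $G(s)=\prod_{p\in S}\frac{1-\chi(p)p^{-s}}{1-f(p)p^{-s}}$; since $\chi$ is primitive and non-principal, $L(s,\chi)$ is entire, while $G$ is meromorphic on $\CC$ with all of its poles on $i\RR$, because $f(p)p^{-s}=1$ forces $\mathrm{Re}(s)=0$. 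Hence $s=0$ is the only pole of the integrand lying off the imaginary axis.

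I would then extract the main term as $\operatorname{Res}_{s=0}$. Using $1-p^{-s}=s\log p\,(1+O(s))$ together with the behaviour of $L$ at the origin (nonvanishing when $\chi(-1)=-1$, a simple zero when $\chi(-1)=1$), a direct Laurent expansion yields $F(s)=c_\chi K\,s^{-N}(1+O(s))$ near $s=0$, where $K$ is exactly the product of the three finite products over $S$ appearing in the statement. The integrand therefore has a pole of order $N+k+1$ at $0$, and since $x^s=\sum_{j\ge0}(s\log x)^j/j!$ its residue is a polynomial in $\log x$ of degree $N+k$ with leading coefficient $c_\chi K/(N+k)!$; multiplying through by $k!$ reproduces $a_{N+k}$. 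This residue is the polynomial $P(\log x)$.

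Next I would move the contour to $\mathrm{Re}(s)=-\sigma$ for a small fixed $\sigma>0$. The shifted vertical line contributes $\ll x^{-\sigma}\int_{\RR}\big|L(-\sigma+it,\chi)\big|\,|-\sigma+it|^{-(k+1)}\,dt$, and the convexity bound $L(-\sigma+it,\chi)\ll(|t|+1)^{1/2+\sigma}$ makes this $O(x^{-\sigma})=o(1)$ as soon as $k\ge 10$. What remains is to bound the residues of the integrand at the poles $s_0\in i\RR\setminus\{0\}$ crossed during the shift, and this is where the real difficulty lies.

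The hard part will be this last sum, and it is precisely where the Diophantine properties of $\log p/\log q$ enter. At a pole $s_0=2\pi i(\theta_p+n)/\log p$ arising from a prime $p\in S$, the residue carries the factor $\prod_{p'\ne p}(1-f(p')p'^{-s_0})^{-1}$; since the ratios $\log p'/\log p$ are irrational these denominators never vanish, but they can be extremely small. Controlling their size is a question about linear forms in logarithms: $|1-f(p')p'^{-s_0}|$ is governed by the distance of $(\theta_p+n)\log p'/\log p$ to the nearest admissible value, for which Baker's theorem supplies a lower bound of the form $|n|^{-c\log p\log p'}$. Feeding this in together with $L(s_0,\chi)\ll|n|^{1/2}$ and $|s_0|^{-(k+1)}\ll(\log p/|n|)^{k+1}$, each residue is $\ll|n|^{\,c(|S|-1)(\max_p\log p)^2+1/2-(k+1)}$, so the sum over $n\ne0$ and over $p\in S$ converges to $O(1)$ exactly once $k$ exceeds $10+\gamma(|S|+1)\max_{p}(\log p)^2$ for a suitable $\gamma>0$. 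Assembling the residue at $0$, the $o(1)$ from the shifted line, and this $O(1)$ contribution from the imaginary poles yields the stated asymptotic; the Baker-type estimate is simultaneously the crux of the proof and the origin of the admissible range of $k$.
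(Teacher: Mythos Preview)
Your proposal is correct and follows essentially the same route as the paper: Perron's formula for the Riesz mean, a contour shift to the left of the imaginary axis picking up the residue at $s=0$ as $P(\log x)$, and Baker--type irrationality bounds (the paper quotes Bugeaud's Lemma) to show the residues at the nonzero imaginary poles sum to $O(1)$ once $k$ is in the stated range. The only technical point you gloss over is the justification of the contour shift through infinitely many poles on $i\RR$; the paper handles this by first truncating at heights $X_j\to\infty$ chosen to stay a fixed distance $\delta/2$ from the poles, bounding the horizontal segments by $O(x^2/X_j)$, and then letting $X_j\to\infty$.
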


It is not difficult to see that if $\sum_{n\leq x}a_n\sim (\log x)^N$, then the Riesz mean of order $k$ of this sequence is asymptotically equal to a polynomial $P(\log x)$ of degree $N+k$. Therefore, the result above says that Theorem \ref{Teorema principal} is optimal in the Riesz mean context.

From an analytic point of view, the Riesz mean behaves more or less as the C\'esaro mean, and in the context of modified characters, Duke and Nguyen \cite{duke_riesz} considered the content of Theorem \ref{Teorema riesz} in the case that $|S|=1$. Their results allowed them to state that there exists a $\pm 1$ completely multiplicative function with bounded C\'esaro ``\textit{discrepancy}'', in contrast with the infinite discrepancy of Tao \cite{taodiscrepancy}. Our result above treats the case $|S|>1$. The new problem that didn't appear in the case $|S|=1$, is that each Euler factor in \eqref{equacao F} produces a periodic sequence of simple poles at the line $\re(s)=0$. The contribution of these simple poles to a Perron integral of the corresponding Riesz mean is roughly at most

$$\sum_{q\in S}\sum_{n=1}^{\infty}\frac{1}{n^{k}} \prod_{p\in S\setminus\{q\}} \left| 1-\exp\left(2\pi i n\frac{\log p}{\log q}\right) \right|^{-1}.$$
The convergence of this sum is, therefore, connected to the problem of how well the numbers $\log p /\log q$ can be approximated by rational numbers. 

A parameter of irrationality of an irrational number $\alpha$ is the exponent $\mu(\alpha)$ defined as the infimum over the real numbers $\eta$ such that the inequality

$$\left|\frac{a}{b}-\alpha \right|\leq \frac{C(\eta)}{b^\eta}$$
admits only a finite number of rational solutions $a/b$ with $b\geq 1$, where $C(\eta)$ is a constant that depends only on $\eta$ and the irrational number $\alpha$. A classical result of Dirichlet is that $\mu(\alpha)\geq 2$ for all irrational numbers $\alpha$, and this inequality is optimal for almost all irrational numbers, with respect to the Lebesgue measure.

In our case, the irrationality of the numbers $\log p/ \log q$ can be treated by Baker's theory of linear forms in logarithms, and we actually have by Theorem 1.1 of Bugeaud \cite{bugeaud_irrationality} (see also the references therein) the following Lemma.

\begin{lemma}\label{Lemma bugeaud} Let $p$ and $q$ be distinct prime numbers. Then there exists a constant $\gamma>0$ that does not depend on $p$ and $q$ such that, for $\mu=\gamma(\log p)(\log q)$, there is a constant $C=C(p,q)$ such that the inequality
$$\left|\frac{a}{b}-\frac{\log p}{\log q}\right|< \frac{C}{b^{\mu+1}}$$
is satisfied only for a finite number of rational numbers $a/b$, with $b\geq 1$.
\end{lemma}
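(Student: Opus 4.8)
The plan is to reduce the statement to a lower bound for the linear form in two logarithms
$$\Lambda := a\log q - b\log p = \log\!\left(\frac{q^a}{p^b}\right),$$
and then to feed in the explicit form of Baker's theory recorded in Theorem 1.1 of \cite{bugeaud_irrationality}. The starting point is the algebraic identity
$$\frac{a}{b}-\frac{\log p}{\log q} = \frac{a\log q - b\log p}{b\log q} = \frac{\Lambda}{b\log q},$$
so that bounding the quantity on the left from below is equivalent to bounding $|\Lambda|$ from below. Since $p$ and $q$ are distinct primes, unique factorization gives $q^a\neq p^b$ for every pair $(a,b)\neq(0,0)$, and hence $\Lambda\neq 0$; this non-vanishing is exactly what makes the linear forms estimate applicable.

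First I would invoke the linear forms in logarithms bound: there is an absolute constant $\gamma>0$, independent of $p$ and $q$, such that
$$|\Lambda| \geq \exp\bigl(-\gamma(\log p)(\log q)\log H\bigr), \qquad H:=\max(|a|,|b|,3).$$
Here the two heights entering Baker's estimate are those of the rational integers $q$ and $p$, namely $\log q$ and $\log p$, which is the source of the product $(\log p)(\log q)$. Next I would control $H$ in terms of $b$ alone. It suffices to treat those $a/b$ with $\left|\frac{a}{b}-\frac{\log p}{\log q}\right|<1$, since the remaining candidates will be disposed of directly. For such $a/b$ one has $|a|\leq b\bigl(\frac{\log p}{\log q}+1\bigr)$, whence $H=O_{p,q}(b)$ and $\log H\leq \log b + O_{p,q}(1)$. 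Substituting into the displayed estimate and writing $\mu=\gamma(\log p)(\log q)$ produces a constant $c=c(p,q)>0$ with $|\Lambda|\geq c\,b^{-\mu}$, and dividing by $b\log q$ gives
$$\left|\frac{a}{b}-\frac{\log p}{\log q}\right| \geq \frac{c(p,q)}{\log q}\cdot\frac{1}{b^{\mu+1}}.$$

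Finally I would convert this lower bound into the asserted finiteness statement by choosing $C=C(p,q)$ strictly smaller than $c(p,q)/\log q$. Any solution of $\left|\frac{a}{b}-\frac{\log p}{\log q}\right|<C/b^{\mu+1}$ lying within distance $1$ of $\log p/\log q$ would then contradict the lower bound above, so every solution must satisfy $\left|\frac{a}{b}-\frac{\log p}{\log q}\right|\geq 1$; but in that case $C/b^{\mu+1}>1$ forces $b^{\mu+1}<C$, leaving only finitely many denominators $b$, and for each of them only finitely many numerators $a$. The genuine difficulty is concentrated entirely in the linear forms in logarithms estimate, which I would use as a black box through \cite{bugeaud_irrationality}; the surrounding argument is the routine but essential bookkeeping of verifying $\Lambda\neq 0$, of replacing the height $H=\max(|a|,|b|)$ by $b$ using the proximity of $a/b$ to $\log p/\log q$, and of adjusting the constants to pass from a pointwise lower bound to finiteness of solutions.
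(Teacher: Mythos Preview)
Your proposal is correct and matches the paper's approach: the paper itself does not spell out a proof of this lemma but simply states it as a direct consequence of Theorem~1.1 of Bugeaud~\cite{bugeaud_irrationality}, and you have carried out exactly the standard reduction from an effective lower bound on the linear form $a\log q - b\log p$ to the finiteness of rational approximations of the claimed quality. The bookkeeping you perform (non-vanishing of $\Lambda$ by unique factorization, replacing $\max(|a|,|b|)$ by $b$ when $a/b$ is close to $\log p/\log q$, and the final case split) is all routine and correctly handled.
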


\subsection{Structure of the paper} In some instances we assume that the reader is familiar with tools from Analytic Number Theory. In section \ref{secao notacao} we state the four main notations used in this paper, then we quickly proceed with proofs in section \ref{secao provas}. We then conclude with some simulations at the end.

\section{Notation}\label{secao notacao}
We use the standard notation 
\begin{enumerate}
\item $f(x)\ll g(x)$ or equivalently $f(x)=O(g(x))$;
\item $f(x)=o(g(x))$;
\item $f(x)=\Omega(g(x))$;
\item $f(x)\sim g(x)$.
\end{enumerate}
1) is used whenever there exists a constant $C>0$ such that $|f(x)|\leq C|g(x)|$, for all $x$ in a set of numbers. This set of numbers when not specified is the real interval $[L,\infty]$, for some $L>0$, but also there are instances where this set can accumulate at the right or at the left of a given real number, or at complex number. Sometimes we also employ the notation $\ll_\epsilon$ or $O_\epsilon$ to indicate that the implied constant may depends in $\epsilon$. 

In case 2), we mean that $\lim_{x}f(x)/g(x)=0$. When not specified, this limit is as $x\to \infty$ but also can be as $x$ approaches any complex number in a specific direction. 

In case 3), we say that $\limsup_x |f(x)|/|g(x)|>0$, where the limit can be taken as in the case 2).

In the last case 4), we mean that $f(x)=(1+o(1))g(x)$.

\section{proofs}\label{secao provas}

\subsection{Proof of Theorem \ref{Teorema principal}}

\begin{proof}[Proof of Theorem \ref{Teorema principal}]
Let $f$, $\chi$ and $S$ be as in the statement of Theorem \ref{Teorema principal}. Then, the Dirichlet series of $f$ can be written as follows:
\begin{align*}
F(s):=\sum_{n=1}^{\infty}\frac{f(n)}{n^s}&=\prod_{p\in S}\frac{1}{1-\frac{f(p)}{p^s}}\prod_{p\notin S}\frac{1}{1-\frac{\chi(p)}{p^s}}\\
&=\left(\prod_{p\in S}\frac{1-\frac{\chi(p)}{p^s}}{1-\frac{f(p)}{p^s}}\right)L(s,\chi).
\end{align*}

Now observe that for the primes $p\in S$ such that $f(p)=+1$, we have that $\chi(p)\neq 1$, and hence, the Euler factor corresponding to this prime in the Euler product above contributes with a simple pole at $s=0$. Similarly, for $p\in S$ with $\chi(p)=1$, we have that $f(p)\neq1$, and hence, the Euler factor corresponding to this prime in the Euler product above contributes with simple zero at $s=0$. In the other cases, the Euler factor is a regular and non-vanishing function at $s=0$.
Therefore, as $s\to 0$, we have that
$$\prod_{p\in S}\frac{1-\frac{\chi(p)}{p^s}}{1-\frac{f(p)}{p^s}}\gg \frac{1}{|s|^T},$$
where $T$ is as in Theorem \ref{Teorema principal}.

Now, recall that we define $N=\max\{0,T\}$ if $\chi(-1)=-1$, otherwise, if $\chi(-1)=1$, $N=\max\{0,T-1\}$. If $N= 0$, then there is nothing to prove since the statement of Theorem \ref{Teorema principal} becomes trivial. We therefore assume $N\geq 1$. We begin by recalling the functional equation for $L(s,\chi)$ (see for instance the book \cite{montgomerylivro}, Corollary 10.9, pg. 333):

\begin{equation}\label{equacao funcional}
L(s,\chi)=\varepsilon(\chi)L(1-s,\overline{\chi})2^s\pi^{s-1}q^{1/2-s}\Gamma(1-s)\sin\left(\frac{\pi}{2}(s+\kappa)\right),
\end{equation}
where: $|\varepsilon(\chi)|=1$, $\Gamma$ is the classical gamma function and $\kappa=0$ if $\chi(-1)=1$, $\kappa=1$ if $\chi(-1)=-1$.

By this functional equation, and the fact that $L(1,\chi)\neq 0$ for all $\chi$, we immediately see that $L(0,\chi)\neq 0$ in the situation that $\chi(-1)=-1$, and, otherwise, has a simple zero coming from the sine function at $s=0$ in the situation that $\chi(-1)=1$.

Therefore, as $s\to 0$, we have that $|F(s)|\gg \frac{1}{|s|^T}$ in the situation that $\chi(-1)=-1$, and $|F(s)|\gg \frac{1}{|s|^{T-1}}$ in the situation that $\chi(-1)=+1$.

Now, in order to complete the proof we will require the following Lemma:

\begin{lemma}\label{Lemma integral} Let $\alpha\geq 0$ and $\Gamma$ the classical Gamma function. Then, for all $\sigma>0$:
$$I(\sigma, \alpha):=\int_{1}^\infty \frac{(\log x)^\alpha}{x^{1+\sigma}}=\frac{\Gamma(\alpha+1)}{\sigma^{1+\alpha}}.$$
\end{lemma}
 
We continue with the proof of Theorem \ref{Teorema principal} and postpone the proof of the Lemma above to the end.

We recall that $F(s)$ can be written as the following Mellin transform of $\sum_{n\leq x}f(n)$ valid for $\re(s)>1$:
$$F(s)=s\int_{1}^{\infty}\left(\sum_{n\leq x}f(n)\right)\frac{dx}{x^{1+s}}.$$
By the bound \eqref{equacao cota superior}, this formula actually holds for all $\re(s)>0$. Therefore

$$\frac{|F(\sigma)|}{\sigma}\leq\int_{1}^{\infty}\left|\sum_{n\leq x}f(n)\right|\frac{dx}{x^{1+\sigma}}.$$

Now, if $\chi(-1)=-1$, we have that
$$\frac{1}{\sigma^{N+1}}\ll\int_{1}^{\infty}\left|\sum_{n\leq x}f(n)\right|\frac{dx}{x^{1+\sigma}},$$
and hence, by Lemma \ref{Lemma integral}, the partial sums $\sum_{n\leq x}f(n)$ cannot be $o((\log x)^N)$, otherwise the integral would be $o(1/\sigma^{N+1})$. Similarly, we obtain same conclusions in the case that $\chi(-1)=+1$, and this completes the proof of the Theorem. \end{proof}

\begin{proof}[Proof of Lemma \ref{Lemma integral}]
We begin by making the change $u=\log x$. Then
$$I(\sigma, \alpha)=\int_{0}^\infty \frac{u^\alpha}{e^{u\sigma}}du.$$
Now we make another substitution: $v=\sigma u$. This leads to
$$I(\sigma, \alpha)=\frac{1}{\sigma^{1+\alpha}}\int_{0}^\infty \frac{v^\alpha}{e^{v}}dv,$$
and this completes the proof.
\end{proof}

\subsection{Proof of Theorem \ref{Teorema riesz} }

\begin{proof}[Proof of Theorem \ref{Teorema riesz}] We have that (see the book \cite{montgomerylivro}, pg. 143)
$$\sum_{n\leq x}f(n)(\log x/n)^k=\frac{k!}{2\pi i}\int_{2-i\infty}^{2+i\infty}\frac{F(s)x^s}{s^{k+1}}ds.$$
Now, for $X>0$
$$\int_{2-i\infty}^{2+i\infty}\frac{F(s)x^s}{s^{k+1}}ds=\int_{2-iX}^{2+iX}\frac{F(s)x^s}{s^{k+1}}ds+O(x^2/X^k).$$
Let $R_X$ be the rectangle with vertices $2-iX, 2+iX, -1+iX$ and $-1-iX$. By formula \eqref{equacao F}, we see that $F$ has a meromorphic continuation to $\CC$ with poles only at the line $\re(s)=0$. Actually, recalling the definition of $M$ in Theorem \ref{Teorema riesz}, as we will show below, $F$ has a pole of order $\max\{0,M\}$ at $s=0$ (by a pole of order $0$ we mean without a pole), and another simple pole at this line. For a fixed $p\in S$, we have a periodic sequence of simple poles related to this prime whenever for real $t$
$$1-f(p)p^{-it}=0.$$ 
Since each $f(p)$ is a root of unity by assumption, there are coprime positive integers $1\leq a_p\leq b_p$ with $f(p)=\exp(2\pi i a_p/b_p)$. Hence, the simple poles attached to $p$ have the form
$$it_p(n):=\frac{2\pi i}{\log p}\left(n+a_p/b_p\right),\;n\in\mathbb{Z}.$$

By the Cauchy residue Theorem, $1/2\pi i$ times the integral of $k!F(s)x^s/s^{k+1}$ along $\partial R_X$, provided that $F(s)$ is regular at $s=\pm iX$, is equal to
$$\res\left(\frac{k!F(s)x^s}{s^{k+1}},s=0\right)+\sum_{p\in S}\sum_{\substack{-X\leq t_p(n)\leq X\\t_p(n)\neq 0}}\res\left(\frac{k!F(s)x^s}{s^{k+1}},s=it_p(n)\right).$$

Now we will compute the first residue at $0$ above. Observe that $1-p^{-s}\sim s\log p$ as $s\to0$. Let $a_{M+k}$ be as in Theorem \ref{Teorema riesz}. By the Euler product formula \eqref{equacao F} for $F(s)$, we have as $s\to 0$
\begin{align*}  
k!F(s)&\sim k!L(s,\chi) \left(\prod_{\substack{p\in S\\ f(p)=1}}\frac{1-\chi(p)}{s\log p}\right)\left(\prod_{\substack{p\in S\\ \chi(p)=1}}\frac{s\log p}{1-f(p)}\right)\left(\prod_{\substack{p\in S\\ f(p),\,\chi(p)\neq 1}}\frac{1-\chi(p)}{1-f(p)}\right)\\
&\sim \frac{(M+k)!a_{M+k}L(s,\chi)}{c_\chi s^T}, 
\end{align*}
where $T$ and $c_\chi$ are as in Theorem \ref{Teorema riesz}.

When $\chi(-1)=-1$, $L(s,\chi)\sim L(0,\chi)$, and when $\chi(-1)=1$, $L(s,\chi)\sim sL'(0,\chi)$. Therefore, as $s\to 0$ we have
$$F(s)\sim \frac{ a_{M+k}(M+k)!}{s^{M}}.$$

Finally, when $M+k\geq 1$, we have that $\res\left(\frac{k!F(s)x^s}{s^{k+1}},s=0\right)$ is the claimed polynomial $P(\log x)$ with degree $M+k$ and with leading coefficient $a_{M+k}$, and when $M+k\leq 0$ this residue is $O(1)$.

Now we are going to show that the contribution of the simple poles at $\re(s)=0$ is at most $O(1)$ as $x\to\infty$ in the claimed range of $k$. By formula \eqref{equacao F}, for a fixed prime $p\in S$
\begin{align*}
&\sum_{\substack{-X\leq t_p(n)\leq X\\t_p(n)\neq 0}}\res\left(\frac{F(s)x^s}{s^{k+1}},s=it_p(n)\right)\\
&\ll_p \sum_{n\in\mathbb{Z}\setminus\{0\}} \frac{1}{|n|^{k+1}} \left|L\left(it_p(n),\chi\right)\right|\prod_{\substack{q\in S\\q\neq p}}
\left|1-f(q)q^{-it_p(n)}\right|^{-1}.
\end{align*}
Now we claim that the product inside the sum above is, except for a finite number of $n$, $\ll n^{(\gamma \max_{p\in S}(\log p)^2+1)(|S|+1)} $, for some constant $\gamma>0$. 

\noindent \textit{Proof of the claim.} Observe that:
\begin{align*}
1-f(q)q^{-it_p(n)}&=1-\exp\left(2\pi i a_q/b_q-2\pi i\frac{\log q}{\log p}\left(n+a_p/b_p\right)\right)\\
&=1-\exp\left(2\pi i\left(n+a_p/b_p\right)\left( \frac{a_q}{b_q\left(n+a_p/b_p\right)}-\frac{\log q}{\log p}\right)\right)\\
&:=1-\exp(2\pi i \alpha_{p,q}(n)).
\end{align*}
Let $\|\alpha_{p,q}(n)\|$ be the distance from $\alpha_{p,q}(n)$ to the nearest integer. Then the function 
$$\frac{1-\exp(2\pi i \alpha_{p,q}(n))}{\|\alpha_{p,q}(n)\|}$$ is bounded away from $0$, and hence
$$|1-\exp(2\pi i \alpha_{p,q}(n))|^{-1}\ll \frac{1}{\|\alpha_{p,q}(n)\|}. $$
Now, by writing $\alpha_{p,q}(n)=l_{p,q}(n)+\epsilon_{p,q}(n)$, where $l_{p,q}(n)$ is an integer and $-1/2\leq \epsilon_{p,q}(n)\leq 1/2$, we see that

\begin{align*}
\|\alpha_{p,q}(n)\|=|\epsilon_{p,q}(n)|&=\left|\left(n+a_p/b_p\right)\left( \frac{a_q}{b_q\left(n+a_p/b_p\right)}-\frac{\log q}{\log p}\right)-l_{p,q}(n)\right|\\
&=\left|\left(n+a_p/b_p\right)\left( \left(\frac{a_q}{b_q}-\l_{p,q}(n)\right)\frac{1}{\left(n+a_p/b_p\right)}-\frac{\log q}{\log p}\right)\right|\\
&\gg_{p,q}\frac{1}{|n|^\mu},
\end{align*}
where the last bound holds for all but a finite number of integers $n$, due to Lemma \ref{Lemma bugeaud}, and this proves the claim.

Before we continue, we recall estimates for $L(s,\chi)$ in the $t$-aspect. The functional equation and estimates for the $\Gamma$ function give that for real $t\to\infty$ (see, for example, Corollary 10.10, pg. 334 of \cite{montgomerylivro})
$$|L(it,\chi)|\ll_{\chi} \sqrt{t}|L(1-it,\bar{\chi})|.$$

On the other hand, for non-principal $\chi$ (see Lemma 10.15 of \cite{montgomerylivro}, pg. 350), as $t\to\infty$, we have that 
$$L(1\pm it,\chi)\ll_\chi \log(|t|+1).$$

Now, going back to the contribution of the simple poles, these last estimates give that the series 
$$\sum_{n\in\mathbb{Z}\setminus\{0\}} \frac{1}{|n|^{k+1}} \left|L\left(it_p(n),\chi\right)\right|\prod_{\substack{q\in S\\q\neq p}}
\left|1-f(q)q^{-it_p(n)}\right|^{-1}$$
is absolutely convergent in the claimed range of $k$, and so, this contribution is at most $O(1)$.

To complete the proof, by combining the classical estimates for the Gamma function with \eqref{equacao F} and the functional equation \eqref{equacao funcional}, the integral
$$\int_{-1-iX}^{-1+iX}\frac{F(s)x^s}{s^{k+1}}ds\ll \int_{-1-iX}^{-1+iX}\frac{|L(s,\chi)|x^{-1}}{|s|^{k+1}}ds\ll 1/x.$$
Further, since the number of simple poles of $F(s)$ up to height $X$ is $\ll X$, there is a fixed $\delta>0$ and an infinite number of points $X_j\to\infty$ such that the distance between $\pm iX_j$ and each one of these simple poles is at least $\delta/2$. On the other hand, in the Euler product representation of $F(it)$, we have that each Euler factor corresponding to a prime $p\in S$ is $\ll |1-f(p)p^{-it}|^{-1}$, and this last function is periodic as a function of $t$, and except at the poles, it is a continuous function. Therefore, each of this Euler factors at the points $\pm iX_j$ are $O_\delta(1)$ provided that $\pm iX_j$ are $\delta/2$-distant from the poles of $(1-f(p)p^{-it})^{-1}$. Hence, along the lines $I_{\pm}=[-1\pm iX_j,2\pm iX_j]$, we have that
$$\int_{I_{\pm}}\frac{F(s)x^s}{s^{k+1}}ds\ll \frac{x^2}{X_j}.$$
Then we obtain the claimed result by making $X_j\to\infty$. \end{proof}

\section{Some simulations}\label{secao simluacoes}
Here we make some simulations with the non-principal Dirichlet character $\chi$ with modulus $3$, \textit{i.e.},
$$\chi(n)=\begin{cases}1,\mbox{ if }n\equiv 1 \mod 3,\\ -1,\mbox{ if }n\equiv 2 \mod 3,\\ 0,\mbox{ otherwise. }\end{cases}$$
For the first primes we have:

\begin{center}
\begin{tabular}{c | c| c |c |c| c| c| c| c}
$p$ &  $2$ & $3$ & $5$ & $7$ & $11$ & $13$ & $17$ & $19$\\

\hline

$\chi(p)$ & $-1$ & $0$ & $-1$ & $1$ & $-1$ & $1$ & $-1$ & $1$\\
\end{tabular}
\end{center}

In Figure \ref{figure 1}, we consider four modifications where we turn the values $\chi(p)=-1$ to $+1$, for four primes $p$. In this case, if $N$ is as in Theorem \ref{Teorema principal}, we have that $N=4$. In Figure \ref{figure 2} we consider five modifications, but $N=3$, and in Figure \ref{figure 3} we also consider four modifications, but $N=0$.

\begin{figure}[h]
\includegraphics[scale=0.5]{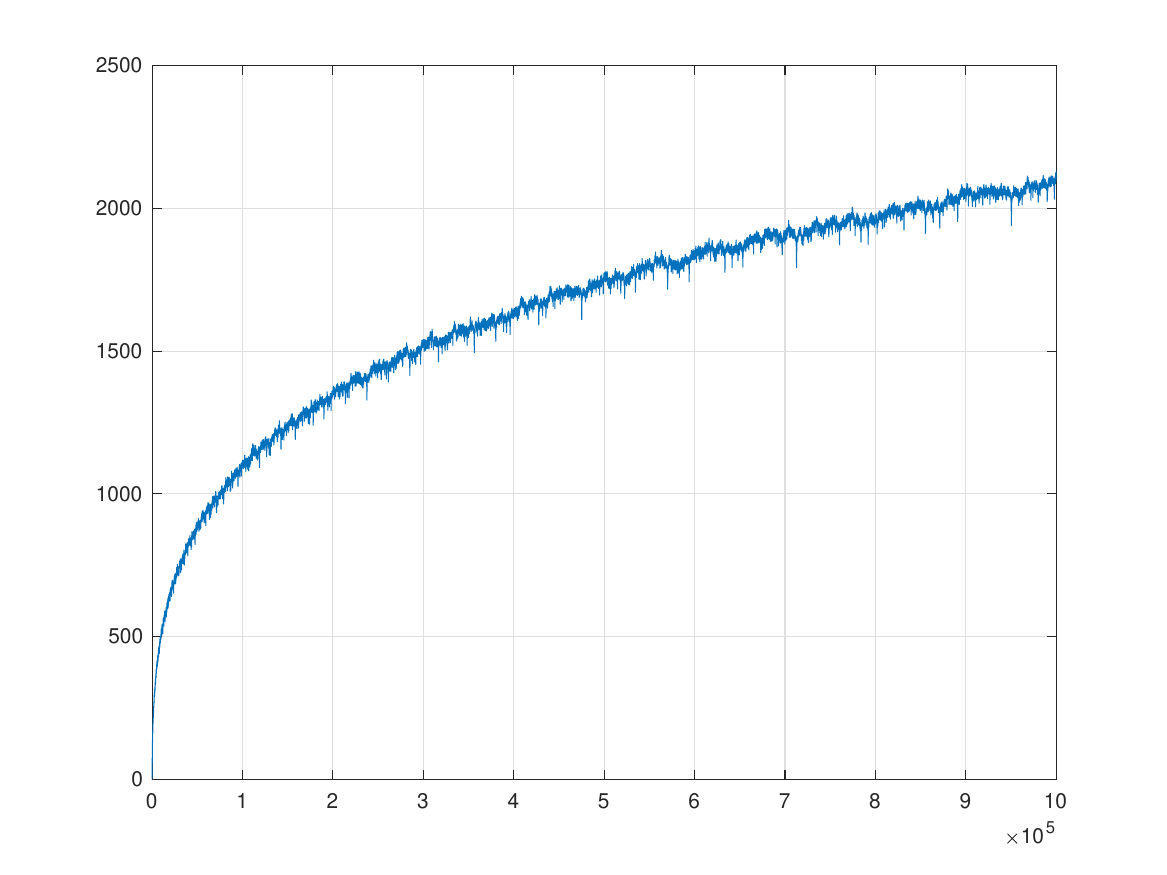}
\caption{A case where $N=4$. In blue we plot the partial sums of $f$, where $f$ is the modification of $\chi$ such that $f(2)=f(3)=f(5)=f(11)=+1$.}
\label{figure 1}
\end{figure}

\begin{figure}[h]
\includegraphics[scale=0.5]{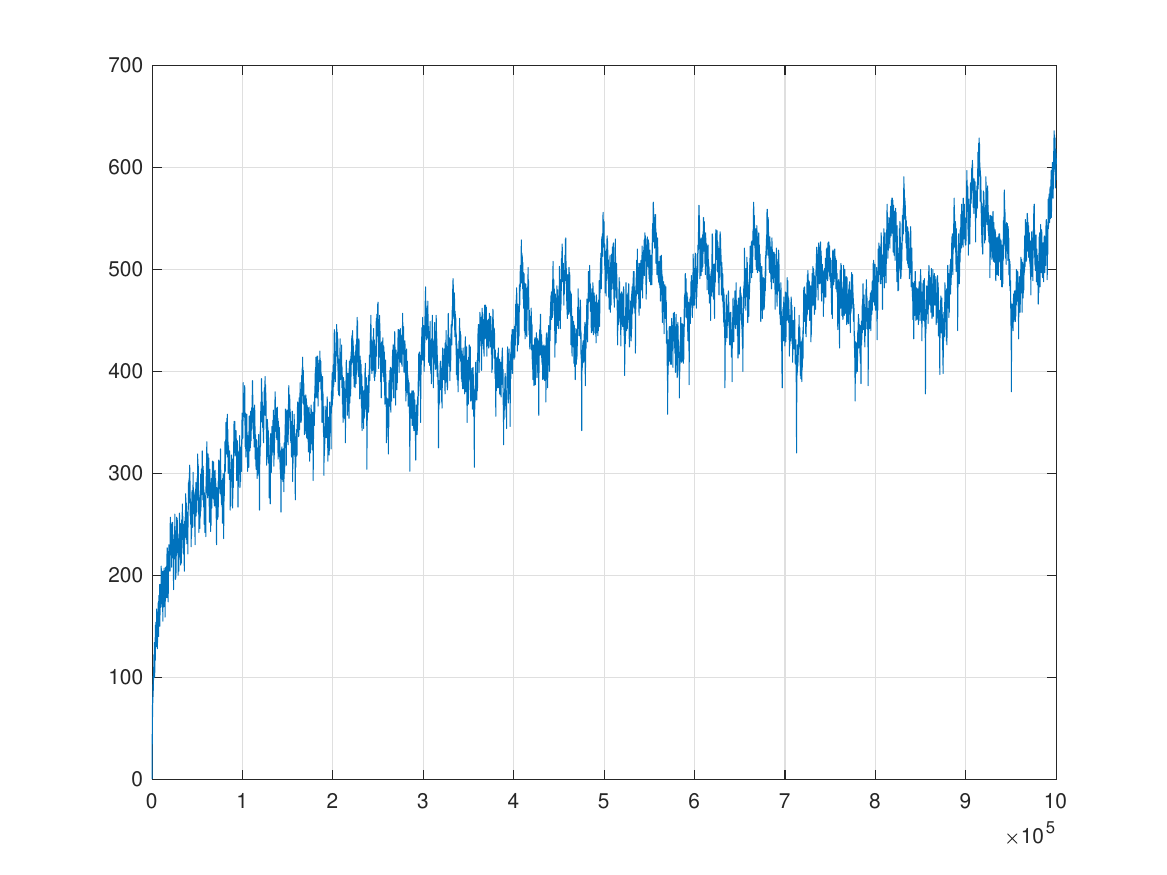} 
\caption{A case where $N=3$ with five modifications. In blue we plot the partial sums of $f$, where $f$ is the modification of $\chi$ such that $f(2)=f(3)=f(5)=f(11)=+1$ and $f(7)=-1$.}
\label{figure 2}
\end{figure}

\begin{figure}[h]
\includegraphics[scale=0.5]{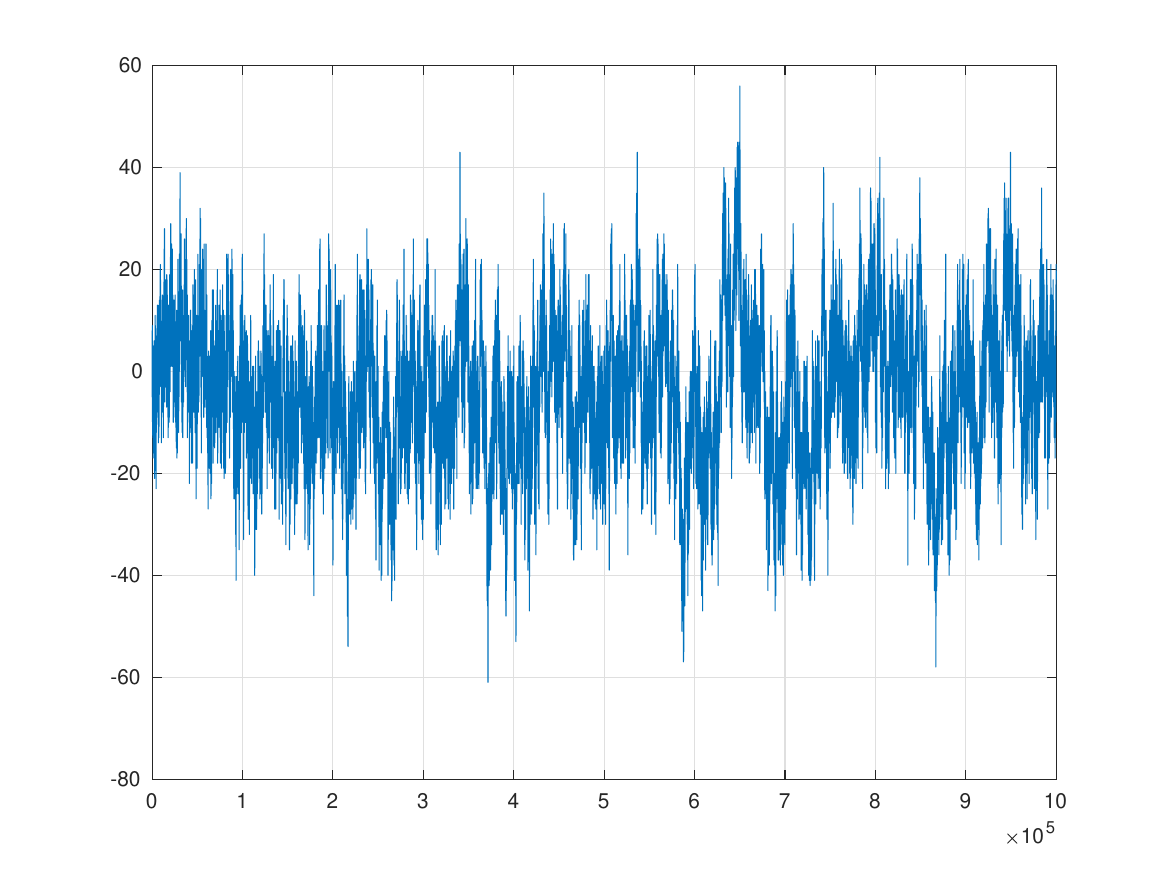} 
\caption{A case where $N=0$ with four modifications. In blue we plot the partial sums of $f$, where $f$ is the modification of $\chi$ such that $f(3)=f(7)=f(13)=f(19)=-1$.}
\label{figure 3}
\end{figure}

\newpage
\noindent \textbf{Acknowledgements}. I am warmly thankful to Carlos Gustavo Moreira (Gugu) for a fruitful discussion on Diophantine Theory and for pointing out the reference of Bugeaud \cite{bugeaud_irrationality}, and for Oleksiy Klurman for his comments on a draft version of this paper. This project was supported by CNPq grant Universal no. 403037/2021-2. The revision of this paper was made while I was a visiting professor at Aix-Marseille Université. I am thankful for their hospitality and for CNPq grant PDE no. 400010/2022-4 (200121/2022-7) for supporting this visit.

{\small{\sc \noindent
Departamento de Matem\'atica, Universidade Federal de Minas Gerais, Av. Ant\^onio Carlos, 6627, CEP 31270-901, Belo Horizonte, MG, Brazil.} \\
\textit{Email address:} aymone.marco@gmail.com}

\end{document}